\newtheorem*{remark}{Remark}
\newtheorem*{proof}{Proof}
\newtheorem{corollary}{Corollary}
\newtheorem{property}{Property}
\newtheorem*{example}{Example}
\newcommand*{\circled}[1]{\lower.7ex\hbox{\tikz\draw (0pt, 0pt)%
    circle (.5em) node {\makebox[1em][c]{\small #1}};}}
\definecolor{colpurple}{rgb}{0.5,0,0.8}
\definecolor{colblue}{rgb}{0, 0, 1}
\definecolor{colred}{rgb}{1, 0, 0}
\definecolor{collwh}{RGB}{129,216,207}
\definecolor{colhzy}{RGB}{0,135,69}
\let\oldequation\equation
\let\oldendequation\endequation
\renewenvironment {equation}
{\linenomathNonumbers\oldequation}
{\oldendequation\endlinenomath}
\begin{document}

\begin{frontmatter}



\title{Lightning graph matching}


\author[XJTLU,LU,bnubj,bnuzh]{Binrui Shen}
\ead{binrui.shen@bnu.edu.cn; Binrui has moved from XJTLU to BNU.}
\author[XJTLU]{Qiang Niu}
\ead{qiang.niu@xjtlu.edu.cn}
\author[bnu,uic]{Shengxin Zhu$_\nmid$}
\ead{Shengxin.Zhu@bnu.edu.cn} 

\address[XJTLU]{Department of Applied Mathematics, School of Mathematics and Physics, Xi'an Jiaotong-Liverpool University, Suzhou 215123, P.R. China}

\address[LU]{Department of Mathematical Sciences, School of Physical Sciences, University of Liverpool, Liverpool, United Kingdom}

\address[bnubj]{School of Mathematical Sciences, Laboratory of Mathematics and Complex Systems, MOE, Beijing Normal University, 100875 Beijing, P.R.China}

\address[bnuzh]{Faculty of Arts and Sciences, Beijing Normal University, 519087 Zhuhai, P.R.China}

\address[bnu]{Research Center for Mathematics, Advanced Institute of Natural Science, Beijing Normal University, Zhuhai 519087, P.R.China}

\address[uic]{Guangdong Provincial Key Laboratory of Interdisciplinary Research and Application for Data Science, BNU-HKBU United International College, Zhuhai 519087, P.R.China}





\begin{abstract}
The spectral matching algorithm is a classic method for finding correspondences between two graphs, a fundamental task in pattern recognition. It has a time complexity of $\mathcal{O}(n^4)$ and a space complexity of $\mathcal{O}(n^4)$, where $n$ is the number of nodes. However, such complexity limits its applicability to large-scale graph matching tasks. This paper proposes a redesign of the algorithm by transforming the graph matching problem into a one-dimensional linear assignment problem. This transformation enables efficient solving by sorting two $n \times 1$ vectors. The resulting algorithm is named the Lightning Spectral Assignment Method (LiSA), which enjoys a complexity of $\mathcal{O}(n^2)$. Numerical experiments demonstrate the efficiency of LiSA, supported by theoretical analysis.
\end{abstract}



\begin{keyword}
Graph matching \sep Power method \sep Assignment problem \sep Spectral-based method
\MSC[2020] 05C60\sep  05C85
\end{keyword}

\end{frontmatter}


\section{Introduction}
\label{sec:intro}
Graph matching aims to find correspondences between nodes of two graphs, similar to point matching. The difference lies in the fact that while point matching only utilizes nodes' attributes, graph matching can additionally incorporate edges' attributes. In recent years, graph matching has been used in activity analysis \cite{chen2012efficient},
shape matching \cite{2010Learning,2011Scale}, detection of similar pictures \cite{shen2020fabricated, shen2023patent}, graph similarity computation \cite{lan2022aednet,lan2022more} and face authentication \cite{wiskott1999face, kotropoulos2000frontal}. Figure \ref{fig:face} illustrates an application in face authentication.
\begin{figure}[ht]
    \centering
    \includegraphics[width=0.5\textwidth]{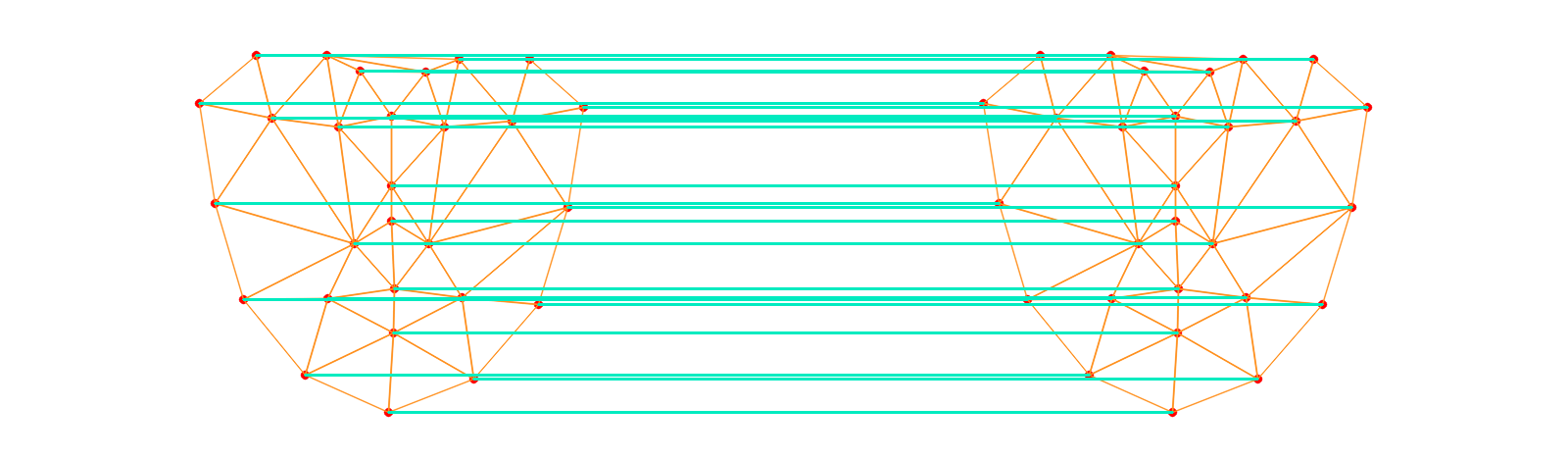}
        \includegraphics[width=0.4\textwidth]{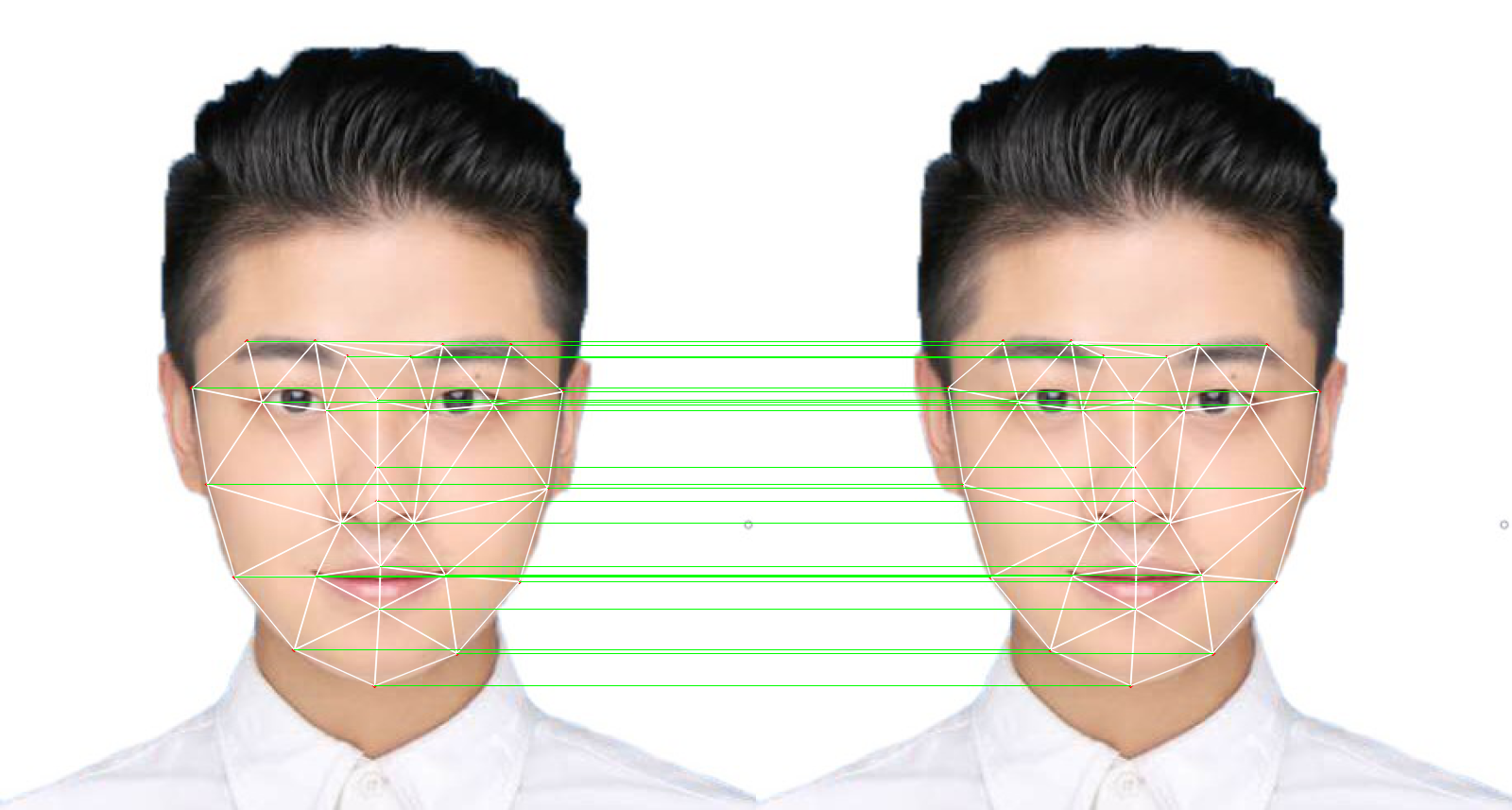}
    \caption{Face matching. In the left picture, the red points are extracted from an author's face; orange lines indicate the relations between nodes; blue lines represent correspondences between nodes of two graphs.}
    \label{fig:face}
\end{figure}

In one-to-one matching, a matching matrix $M$ can represent correspondences between two graphs:
\begin{linenomath}
\begin{equation}
M_{i j}=\left\{\begin{array}{ll}
1 & \text { if node } i \text { in graph } \mathbb{G} \text { corresponds to node } j \text { in } \widetilde{\mathbb{G}}, \\
0 & \text { otherwise. }
\end{array}\right.
\end{equation}
\end{linenomath}
$M$ is a \textit{permutation matrix} when graphs have the same cardinality of the vertices, i.e., $n = \tilde{n}$, referred to as the graph size in this paper. The set of permutation matrices is denoted by $\Pi_{n \times n} = \{M\mathbf{1}=\mathbf{1},M^{T}\mathbf{1}=\mathbf{1}, M\in\{0,1\}^{n\times n}\}.$
The information of two matching graphs can be stored in \textit{affinity matrices} $A$ and $\tilde{A}$, where $A_{ii}$ encodes the feature of $i_{th}$ node and $A_{ij}$ represents the weight of the edge between $i_{th}$ node and $j_{th}$ node.

The point matching problem is defined only by the nodes' features. The key step is to construct a matrix $U \in \mathbb{R}^{n \times n} $ such that
\begin{equation}
    U_{i j}=\Omega_v\left(A_{ii}, \tilde{A}_{jj}\right),
\end{equation}
where $\Omega_v$ is an affinity measure. The correspondences can be obtained by maximizing the total \textit{affinity score}
\begin{equation}
        M^* = \arg \max_{M \in \Pi_{n \times n}}  \ \ \sum M_{ij} U_{ij},
    \label{eq: assignment problem}
\end{equation}
where the constraints enforce the one-to-one mapping. The problem \eqref{eq: assignment problem} is also called the linear assignment problem, and $U$ is called  \textit{profit matrix}. Such an assignment problem can be solved by the Hungarian algorithm \cite{kuhn1955hungarian} with a cost $\mathcal{O}(n^3)$.

Graph matching problems further consider the similarities between edges scored by an affinity measure \(\Omega_e\). The similarities can be stored in a matrix $W \in \mathbb{R}^{n^2 \times n^2}$: 
$$W_{i \tilde{i},j \tilde{j}} = 
\begin{cases}
\Omega_v(A_{ij},\tilde{A}_{\tilde{i} \tilde{j}}) & \text { for } i= j, \tilde{i} =\tilde{j}
\\ 
\Omega_e(A_{ij},\tilde{A}_{\tilde{i} \tilde{j}}) & \text { for } i\neq j, \tilde{i} \neq \tilde{j}
\\
\ \ \ \ \ \ \ \ \ 0 &\ \text {else.}
\end{cases}
$$
Therefore, the affinity score is
\begin{equation}
    \sum M_{i\tilde{i}}M_{j\tilde{j}} W_{i \tilde{i},j \tilde{j}},
\end{equation}
which yields the following \textit{Lawler quadratic problem }\cite{lawler1963quadratic}
\begin{linenomath}
\begin{equation}
    \mathbf{m}^* = \arg \max \mathbf{m}^{T} W \mathbf{m},
    \label{eq:IQP}
\end{equation}
\end{linenomath}
where $\mathbf{m} \in \{0,1\}^{n^2 \times1 }$ and $\mathbf{m} = \operatorname{vec}(M)$. If $W \in \mathbb{R}^{n^2 \times n^2}$ is dense, most algorithms based on \eqref{eq:IQP} may suffer from time complexity $\mathcal{O}(n^4)$ and space complexity $\mathcal{O}(n^4)$ \cite{gold1996graduated, leordeanu2009integer, 2010Reweighted}. For efficiency, \citet{zaslavskiy2008path} consider a case that $W=A \otimes \tilde{A}$, the problem  \eqref{eq:IQP} is equivalent to 
\begin{equation}
M^* = \arg \min_{M \in \Pi_{n \times n}} \ \ \left\|A-M\widetilde{A}M^{T}\right\|_{F}^{2}.
\label{eq:KBQP}
\end{equation}
which is called the \textit{Koopmans-Beckmann quadratic problem} \cite{1957Assignment}. The core of \eqref{eq:KBQP}  is to minimize the discrepancies between two graphs (detailed derivation details of \eqref{eq:IQP} to \eqref{eq:KBQP} can be found in \cite{lu2016fast}). Under this formulation, computation only involves a few $n \times n$ matrices rather than an $n^2 \times n^2$ one. 

Since graph matching is generally an NP-hard problem \cite[GT49]{garey1979computers}, many recent publications on graph matching focus on continuous relaxation, aiming to find sub-optimal solutions at an acceptable cost. Common approaches include, but are not limited to, bipartite matching \cite{serratosa2014fast}, spectral relaxation \cite{2010Reweighted, cour2006balanced},
doubly stochastic relaxation \cite{leordeanu2009integer,lu2016fast,shen2022dyna, shen2024adaptive},
continuous optimization \cite{zaslavskiy2008path} and probabilistic modeling \cite{egozi2012probabilistic}. Among recently proposed graph matching algorithms, spectral-based methods \cite{umeyama1988eigendecomposition, luo2003spectral, caelli2004inexact,leordeanu2005spectral,robles2007riemannian} have received a lot of attention due to their efficiency. These approaches rely on the fact that eigenvalues and the eigenvectors of a graph's adjacency or Laplacian matrix are invariant to node permutation.

The computational complexity of algorithms utilizing continuous relaxation is generally no less than $\mathcal{O}(n^3)$. The essence of relaxation lies in solving the original problem under relaxed constraints initially, followed by converting the continuous solution back to the discrete domain. The second step involves solving a linear assignment problem where the profit matrix represents a continuous solution. This perspective suggests that graph matching and point matching share similar processes: constructing a profit matrix and resolving the underlying assignment problem. A fast doubly stochastic projected fixed-point method (DSPFP) \cite{lu2016fast} requires at least $\mathcal{O}(n^3)$ to obtain a continuous solution, and the general assignment algorithms have a worst-case complexity of $\mathcal{O}(n^3)$ as well. In this framework, graph matching algorithms based on relaxation have at least $\mathcal{O}(n^3)$ cost. Such complexity is expensive for large graph matching problems.

In this work, we contribute a novel graph matching algorithm LiSA with only $\mathcal{O}(n^2)$ time complexity by redesigning the classic spectral matching (SM) \cite{leordeanu2005spectral} with worst-case complexity of $\mathcal{O}(n^4)$. Such lightning speed comes from transforming the graph matching problem into a one-dimensional linear assignment problem. To begin with, we aggregate two graphs into two sets of points using spectral scores derived from the principal eigenvectors. These points are matched according to the scores: the node with the largest spectral score in $\mathbb{G}$ matches the node with the largest spectral score in $\tilde{\mathbb{G}}$; the second largest one matches another second largest one and so on. A more general method is Umeyama's algorithm \cite{umeyama1988eigendecomposition} that uses all eigenvectors. We prove that LiSA can find the perfect correspondences when matching graphs are isomorphic. A robustness analysis of LiSA is also given.

The rest of the paper is organized as follows. In Section 2, we first review SM and introduce its fast implementation for the Koopmans-Beckmann quadratic problem. We introduce LiSA in section 3. The invariant properties and robustness of LiSA are studied in section 4. In the last section, we give the concluding remarks and discuss ideas for future works.


        

\section{Spectral matching}
In this section, we recall the SM \cite{leordeanu2005spectral} and derive its fast implementation for the Koopmans-Beckmann quadratic problem.

\subsection{Spectral Matching Algorithm}
To relax problem \eqref{eq:IQP}, \citet{leordeanu2005spectral} drop the integer constraints, then a soft solution can be obtained from
\begin{equation}
    \mathbf{x}^* = \arg \max _\mathbf{x}\frac{\mathbf{x}^T W \mathbf{x}}{\mathbf{x}^{T} \mathbf{x}}.
    \label{eq:RelaxIQP}
\end{equation}
 They interpret the $X_{ij}^{*} \in [0, 1]$, $\mathbf{x}^* = \operatorname{vec}(X^*)$, as the association of the corresponding matching pair between node $i$ of $\mathbb{G}$ and node $j$ of $\tilde{\mathbb{G}}$. The Rayleigh's ratio theorem \cite[Sec. 4.1]{hilbert1985methods} states that $\mathbf{x}^*$ is the principal eigenvector of $W$, which can be numerically approximated by the power method:
\begin{equation}
    \mathbf{x}^{(k)} =  \frac{W\mathbf{x}^{(k-1)}}{\max(W\mathbf{x}^{(k-1)})}.
    \label{eq:spectral}
\end{equation}
Since only the relative values between the elements matter, the maximum of $\mathbf{x}$ is scaled to 1. Each iteration needs $\mathcal{O}(n^4)$ operations since $W \in \mathbb{R}^{n^2 \times n^2}$. The algorithm of the power method is shown in Algorithm \ref{ag:SM}. If $W$ is sparse, variants of the Lanczos method can compute its principle eigenvector with a cost less than $\mathcal{O}(n^3)$  \cite{leordeanu2005spectral}.

The solution is converted back into the original discrete domain by solving the assignment problem \eqref{eq: assignment problem} defined by the soft solution $\mathbf{x}^*$. \citet{leordeanu2005spectral} propose a greedy algorithm, which is slightly faster than the Hungarian method in practice \cite{lu2016fast}. However, the greedy algorithm does not guarantee the optimal solution.  

\begin{CJK*}{UTF8}{gkai}
    \begin{algorithm}
        \caption{The power method}
        \begin{algorithmic}[1] 
            \Require $W$
            \State $Initial$  $\mathbf{x}^{(0)} = \mathbf{1}$
                \For{$k=1,2 \dots, $ until convergence}
                    \State $\mathbf{z}^{(k)} = W\mathbf{x}^{(k-1)}$
                    \State $\mathbf{x}^{(k)} = \frac{\mathbf{z}^{(k)}}{\max(\mathbf{z}^{(k)})}$
                \EndFor
                \State \Return{$\mathbf{x}$}
        \end{algorithmic}
        \label{ag:SM}
    \end{algorithm}
\end{CJK*}

\subsection{Spectral matching for the Koopmans-Beckmann quadratic problem}

For the Koopmans-Beckmann quadratic problem \eqref{eq:KBQP}, we can rewrite the iterative formula \eqref{eq:spectral} in matrix form
\begin{equation}
    X^{(k)} = \frac{AX^{(k-1)}\tilde{A}}{\max(AX^{(k-1)}\tilde{A})},
\end{equation}
which use the fact that $ (A\otimes \tilde{A}) \mathbf{x} = \operatorname{vec}{(AX\tilde{A})}$ \cite[Thm.16.2.1]{Harville2008Matrix}.  With this formulation, the computational cost of each iteration is reduced to $\mathcal{O}(n^3)$ \cite{lu2016fast}.  The Spectral matching for the Koopmans-Beckmann quadratic problem (SM-KB) is displayed in Algorithm \ref{ag.SMKB}.

\begin{CJK*}{UTF8}{gkai}
    \begin{algorithm}
        \caption{Spectral matching for the Koopmans-Beckmann quadratic problem (SM-KB)}
        \begin{algorithmic}[1] 
            \Require $A,\tilde{A}$
            \Ensure $M$       
                 \State $Initial$  $X = \mathbf{1 1}_{n \times \tilde{n}}^{T} / n \tilde{n}$
                 \For{$k=1,2 \dots, $ until convergence}
                    \State $Z^{(k)} =AX^{(k-1)}\tilde{A}$
                    \State $X^{(k)}=\frac{Z^{(k)}} {\max(Z^{(k)})} $
                \EndFor
                \State Discretize X to obtain $M$
                \State \Return{$M$}
        \end{algorithmic}
        \label{ag.SMKB}
    \end{algorithm}
\end{CJK*}

\section{Lightning Spectral Assignment method}
In this section, we introduce the lightning spectral assignment method for graph matching tasks formulated by the Koopmans-Beckmann quadratic problem.
\subsection{Lightning eigenvector method}
We introduce a method to efficiently compute the leading eigenvector of $W = A \otimes \tilde{A}$. The problem \eqref{eq:RelaxIQP} can be reformulated as
\begin{equation}
    \mathbf{x}^* = \arg \max_\mathbf{x}\frac{\mathbf{x}^{T} (A \otimes \tilde{A}) \mathbf{x}}{\mathbf{x}^{T} \mathbf{x}}.
    \label{eq:relaxKB}
\end{equation}
Notice that $\mathbf{x^*} = \varphi \otimes \tilde{ \varphi}$ \cite[Thm.21.11.1]{Harville2008Matrix}, where $\varphi$ and $\tilde{\varphi}$ are leading eigenvectors of $A$ and $\tilde{A}$, respectively. Replacing $\mathbf{x}$ by $\mathbf{v} \otimes \tilde{\mathbf{v}}$ where $\mathbf{v}, \tilde{\mathbf{v}} \in \mathbb{R}^{n \times 1}$, we have 

\begin{equation}
\frac{\mathbf{x}^T W \mathbf{x}}{\mathbf{x}^{T} \mathbf{x}}=\frac{(\mathbf{v} \otimes \tilde{\mathbf{v}})^{T}(A \otimes \tilde{A})(\mathbf{v} \otimes \tilde{\mathbf{v}})}{\left(\mathbf{v} \otimes \tilde{\mathbf{v}}\right)^{T}(\mathbf{v} \otimes \tilde{\mathbf{v}})} 
\end{equation}
According to \cite[Lem. 16.1.2]{Harville2008Matrix},
\begin{equation}
\begin{aligned}
\frac{(\mathbf{v} \otimes \tilde{\mathbf{v}})^{T}(A \otimes \tilde{A})(\mathbf{v} \otimes \tilde{\mathbf{v}})}{(\mathbf{v} \otimes \tilde{\mathbf{v}})^{T}(\mathbf{v} \otimes \tilde{\mathbf{v}})} =&\frac{(\mathbf{v}^{T} A \mathbf{v}) \times(\tilde{\mathbf{v}}^{T} \tilde{A} \tilde{\mathbf{v}})}{\left(\mathbf{v}^{T} \mathbf{v}\right) \times\left(\tilde{\mathbf{v}}^{T} \tilde{\mathbf{v}}\right)} 
\\
=&\frac{\mathbf{v}^{T} A\mathbf{v}}{\mathbf{v}^{T} \mathbf{v}} \times \frac{\tilde{\mathbf{v}}^{T} \tilde{A} \tilde{\mathbf{v}}}{\tilde{\mathbf{v}}^{T} \tilde{\mathbf{v}}}.\end{aligned}
\end{equation}
We can rewrite the problem \eqref{eq:relaxKB} as two sub-problems:
\begin{equation}
    \begin{aligned}
    \mathbf{x}^* & = \varphi \otimes \tilde{\varphi}, \\
    \varphi &= \arg \max _\mathbf{v}\frac{\mathbf{v}^{T} A \mathbf{v}}{\mathbf{v}^{T} \mathbf{v} }, \\
    \tilde{\varphi}&= \arg \max_{\tilde{\mathbf{v}}}\frac{\tilde{\mathbf{v}}^{T} \tilde{A} \tilde{\mathbf{v}}}{\tilde{\mathbf{v}}^{T} \tilde{\mathbf{v}}}. 
    \label{eq:lightning relaxedKB}
    \end{aligned}
\end{equation}
The matrix form of $\mathbf{x}^*$ can be obtained by
\begin{equation}
    X^* = \mathbf{\varphi} \otimes \tilde{\mathbf{\varphi}}^T,
    \label{eq:1D_profit}
\end{equation}
where $\mathbf{x}^* = \operatorname{vec}(X^*)$.

\subsection{One-dimensional assignment}


 
The matching matrix is obtained from the assignment problem based on the profit matrix $X^*$:
\begin{align}
      M^* =& \arg \max_{M \in \Pi_{n \times n}} \sum M_{ij}X^*_{ij} \\
=& \arg \max_{M \in \Pi_{n \times n}} \sum M_{ij}\mathbf{\varphi}_i\tilde{\mathbf{\varphi}}_j\\
=& \arg \max_{M \in \Pi_{n \times n}} \sum  \mathbf{\varphi}_i\tilde{\mathbf{\varphi}}_{M(i)}.
\label{eq:1D_permutaion}
  \end{align}
where $\tilde{\mathbf{\varphi}}_{M(i)}$ represents  $\tilde{\mathbf{\varphi}}_{i}$ permuted by $M$ i.e., $M_{iM(i)} =1$. This is a one-dimensional assignment problem that can be computed by sorting with $\mathcal{O}(n \ln n)$ operations \cite{rabin2012wasserstein} according to the rearrangement inequality. 
\begin{theorem*}

Rearrangement inequality \cite{hardy1952inequalities} 
\\ Let
$$
z_1 \geq \cdots \geq z_n \quad \text { and } \quad y_1 \geq \cdots \geq y_n,
$$
then
$$
z_{\sigma(1)} y_1+\cdots+z_{\sigma(n)} y_n \leq z_1 y_1+\cdots+z_n y_n,
$$
where 
$z_{\sigma(1)}, \ldots, z_{\sigma(n)}$ represent permutations of $z_1, \ldots, z_n$.
\end{theorem*} 
The rearrangement inequality theorem indicates that the sum achieves maximum when two series follow the same order. It suggests that we can solve such an assignment problem by sorting $\mathbf{\varphi}$ and $\tilde{\mathbf{\varphi}}$ in descending order. To make the idea more straightforward to understand, we give an example to illustrate the process:
\begin{example} \textbf{A one-dimensional assignment}

For given two vectors $\mathbf{\varphi}$ and $\tilde{\mathbf{\varphi}}$
 $$
[\mathbf{\varphi}|index]=\left[\begin{array}{l|l}
\ 4\  &{\circled{1}}\\
\ 5\  & {\circled{2}}\\
\ 6\  & {\circled{3}}\end{array}\right], [\widetilde{\mathbf{\varphi}}|index]=\left[\begin{array}{c|c}
\ 4\  & {\circled{1}}\\
\ 6\  & {\circled{2}}\\
\ 5\  & {\circled{3}}\end{array}\right],
$$
where numbers in circles represent the original indexes. By sorting, we have  
$$
\operatorname{sort}([\mathbf{\varphi}|index])=\left[\begin{array}{l|l}
\ 6\  & {\circled{3}}\\
\ 5\  & {\circled{2}}\\
\ 4\  & {\circled{1}}\end{array}\right], \operatorname{sort}(\widetilde{\mathbf{\varphi}}|index)=\left[\begin{array}{c|c}
\ 6\  & {\circled{2}}\\
\ 5\  & {\circled{3}}\\
\ 4\  & {\circled{1}}\end{array}\right].
$$
The two vectors can be matched directly,
 $$\left[\begin{array}{l}
( {\circled{3}}, {\circled{2}}) \\
( {\circled{2}}, {\circled{3}}) \\
( {\circled{1}}, {\circled{1}})
\end{array}\right] \Leftrightarrow  \ 
M =\left[ \begin{matrix}  
  1,\ 0,\ 0
\\0,\ 0,\ 1
\\0,\ 1,\ 0
 \end{matrix} \right].$$
\end{example}

The one-dimensional assignment also applies to the unbalanced case that $n < \tilde{n}$. We generalize the rearrange theorem to the following corollary.
\begin{corollary} Rearrangement inequality for the unbalanced case ($n < \tilde{n}$)
\\ Let
$$
z_1 \geq \cdots \geq z_n \geq \cdots \geq z_{\tilde{n}} \quad \text { and } \quad y_1 \geq \cdots \geq y_n,
$$
then
$$
z_{\sigma(1)} y_1+\cdots+z_{\sigma(n)} y_n \leq z_1 y_1+\cdots+z_n y_n,
$$
where $z_{\sigma(1)}, \ldots, z_{\sigma(n)}$ represent permutations of $z_1, \ldots, z_n$.
\end{corollary}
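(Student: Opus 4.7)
The plan is to reduce the unbalanced case to the balanced Rearrangement Inequality stated earlier, via a two-step argument: first show that an optimal selection takes the top $n$ of the $z$'s, then apply the classical inequality to order them against the $y$'s. I will interpret $z_{\sigma(1)},\ldots,z_{\sigma(n)}$ as any choice of $n$ distinct entries from $z_1,\ldots,z_{\tilde n}$ (i.e.\ $\sigma$ is an injection $\{1,\ldots,n\}\to\{1,\ldots,\tilde n\}$), since restricting $\sigma$ to permute only $\{1,\ldots,n\}$ would make the statement identical to the balanced theorem and give nothing new. Because the corollary is used with $\varphi,\tilde\varphi$ being principal eigenvectors of nonnegative affinity matrices, the entries are nonnegative by Perron--Frobenius, and I will assume $z_i,y_i\ge 0$ throughout — this nonnegativity is crucial for the swap step below.

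First I would address the \emph{selection} step. Let $\sigma^\ast$ be an injection maximizing $S(\sigma)=\sum_{i=1}^n z_{\sigma(i)} y_i$. I claim $\sigma^\ast(\{1,\ldots,n\})=\{1,\ldots,n\}$. Suppose for contradiction that some $i^\ast\in\{1,\ldots,n\}$ has $k:=\sigma^\ast(i^\ast)>n$. Since $|\sigma^\ast(\{1,\ldots,n\})|=n$, there exists $j\in\{1,\ldots,n\}\setminus\sigma^\ast(\{1,\ldots,n\})$. As $j\le n<k$ and the $z$'s are nonincreasing, $z_j\ge z_k$. Construct $\sigma'$ agreeing with $\sigma^\ast$ everywhere except $\sigma'(i^\ast)=j$; then
\begin{equation}
S(\sigma')-S(\sigma^\ast)=(z_j-z_k)\,y_{i^\ast}\ge 0,
\end{equation}
so $\sigma'$ is also optimal. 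Repeating this swap finitely many times yields an optimal $\sigma$ with image exactly $\{1,\ldots,n\}$.

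Second is the \emph{ordering} step. Once $\sigma$ is a permutation of $\{1,\ldots,n\}$, the classical Rearrangement Inequality applied to $z_1\ge\cdots\ge z_n$ and $y_1\ge\cdots\ge y_n$ directly yields
\begin{equation}
z_{\sigma(1)} y_1+\cdots+z_{\sigma(n)} y_n \;\le\; z_1 y_1+\cdots+z_n y_n,
\end{equation}
which is the desired inequality.

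The main obstacle is the nonnegativity hypothesis. Without $y_{i^\ast}\ge 0$ the swap in the selection step is not monotone, and the corollary in fact fails (e.g.\ a large negative tail $z_{\tilde n}$ paired with a negative $y_i$ can beat the top-$n$ choice). In the graph-matching context this is not a problem since $\varphi,\tilde\varphi$ are nonnegative principal eigenvectors, but the cleanest write-up either states nonnegativity explicitly as a hypothesis or invokes Perron--Frobenius at the outset. Apart from this point, the argument is a straightforward swap-plus-reduction and requires no new machinery beyond the balanced theorem already quoted.
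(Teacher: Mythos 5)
Your argument is correct under your stated nonnegativity hypothesis, but it takes a genuinely different route from the paper. The paper's proof is a one-line reduction: it pads the shorter sequence with zeros, setting $y_{n+1}=\cdots=y_{\tilde n}=0$, applies the balanced rearrangement inequality to the two length-$\tilde n$ sequences, and then drops the zero terms. You instead argue directly in two stages: an exchange argument showing that an optimal injection $\sigma$ must select the top $n$ entries $z_1,\ldots,z_n$ (swapping any $z_k$ with $k>n$ for an unused $z_j$ with $j\le n$ can only increase the sum when $y_{i^\ast}\ge 0$), followed by the balanced theorem to order those $n$ entries against the $y$'s. The paper's padding trick is shorter; your swap argument is more transparent about \emph{why} the top-$n$ selection is optimal and makes the role of each hypothesis visible. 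Notably, both proofs secretly require $y_n\ge 0$: the paper's padded sequence $y_1\ge\cdots\ge y_n\ge 0=y_{n+1}$ is only correctly ordered under that assumption, which the paper never states, whereas you flag the issue explicitly and justify it via Perron--Frobenius in the intended application. Your reading of $\sigma$ as an injection $\{1,\ldots,n\}\to\{1,\ldots,\tilde n\}$ (rather than a permutation of $\{1,\ldots,n\}$, which would make the corollary vacuous) is also the interpretation the paper's own proof implicitly adopts, so that editorial choice is sound.
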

\begin{proof}
Let $y_{n+1} = y_{n+2} \ldots = y_{\tilde{n}} =0$. According to the rearrangement inequality,
$$
z_{\sigma(1)} y_1+\cdots+z_{\sigma({\tilde{n}})} y_{\tilde{n}} \leq z_1 y_1+\cdots+z_{\tilde{n}} y_{\tilde{n}}.
$$
By dropping zero terms, we have 
$$
z_{\sigma(1)} y_1+\cdots+z_{\sigma(n)} y_{n} \leq z_1 y_1+\cdots+z_n y_n,
$$
which completes the proof.
\end{proof}
The corollary allows us to solve the unbalanced assignment problem similarly: by matching sorted $\mathbf{\varphi}$ with the top $n$ elements of sorted $\tilde{\mathbf{\varphi}}$ directly.

We give a classic case to illustrate the idea of this assignment algorithm: assigning $n$ workers to complete $n$ tasks alone. Assume $\tilde{\mathbf{\varphi}}_i$ represents the capability of the person $i$ and $\mathbf{\varphi}_i$ represents the difficulty of the task $i$. We assign the best worker to complete the most challenging task, followed by the second best worker to complete the second most challenging task; the remaining tasks can be assigned in the same order. This idea is reminiscent of a famous line from the Spider-Man movie: "With great power comes great responsibility."

In another view, the assignment problem \eqref{eq:1D_permutaion} is equivalent to

\begin{equation}
    M^* = \arg \min_{M \in \Pi_{n \times n}} \|\mathbf{\varphi}-M\tilde{\mathbf{\varphi}}\|^2_2,
    \label{eq:assignment difference}
\end{equation}
with 
\begin{equation}
    \|\mathbf{\varphi}-M\tilde{\mathbf{\varphi}}\|^2_2 = \sum_i( \mathbf{\varphi}_i\tilde{\mathbf{\varphi}}_{M(i)}+ \underbrace{\mathbf{\varphi}_i^2 +\tilde{\mathbf{\varphi}}_i^2}_{constants}).
\end{equation}
From the problem \eqref{eq:assignment difference}, we can find an assignment by minimizing the disagreements.
\subsection{Algorithm}
We obtain a lightning spectral assignment matching algorithm (LiSA) by combining the lightning eigenvector method and one-dimensional assignment. The main idea is based on the fact that the eigenvectors of the affinity matrix of a graph are invariant concerning node permutation. The whole algorithm is shown in Algorithm \ref{ag.LiSA}. Regardless of sparse matrix computation, LiSA has time complexity only $\mathcal{O}(n^2)$ and space complexity $\mathcal{O}(n^2)$. The power method with $\mathcal{O}(n^2)$ computational cost is used for step 1 and step 2. The one-dimensional assignment only has $\mathcal{O}(n \ln n)$ time cost. 
\begin{CJK*}{UTF8}{gkai}
    \begin{algorithm}
        \caption{Lightning spectral assignment method (LiSA)}
        \begin{algorithmic}[1] 
            \Require $A,\tilde{A}$
            \Ensure $M$       
                \State  $\varphi = \arg \max \limits_\mathbf{v}\frac{\mathbf{v}^{T} A \mathbf{v}}{\mathbf{v}^{T} \mathbf{v} }$ 
                \State $\tilde{\varphi}= \arg \max\limits_{\tilde{\mathbf{v}}}\frac{\tilde{\mathbf{v}}^{T} \tilde{A} \tilde{\mathbf{v}}}{\tilde{\mathbf{v}}^{T} \tilde{\mathbf{v}}}$
                \State One-dimensional assignment $(\mathbf{\varphi},\tilde{\mathbf{\varphi}}) \ \Rightarrow M$
                \State \Return{$M$}
        \end{algorithmic}
        \label{ag.LiSA}
    \end{algorithm}
\end{CJK*}



Despite being a fast member of the spectral-based algorithms, LiSA can be viewed from another perspective: the algorithm transforms graphs into points with aggregated features encompassing local and structural information. This enables the matching of two sets of points by sorting their respective aggregating features. The primary process is shown in Figure \ref{fig:gap}.
\begin{figure}
    \centering
    \includegraphics[width=0.9\textwidth]{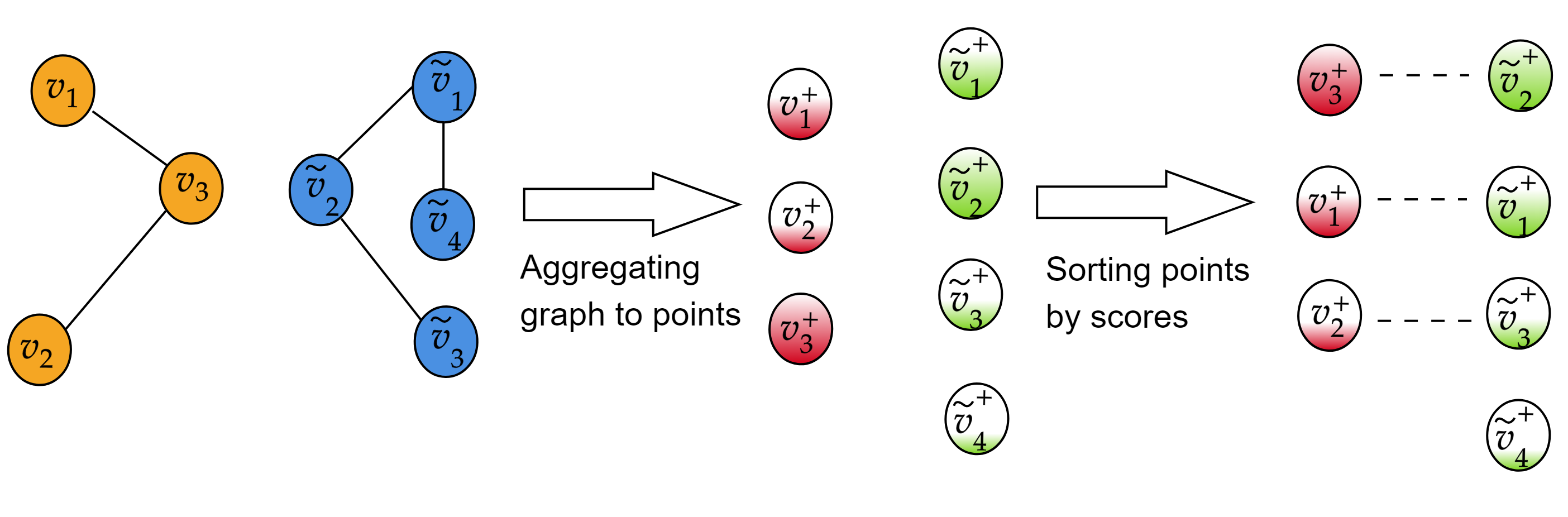}
    \caption{The graph to aggregation pointing. $v_1^+$ represents the aggregation point corresponding to $v_1$. Other notations of aggregation points follow in this manner. Richer colors of aggregation points mean larger features.}
    \label{fig:gap}
\end{figure}
In LiSA, the aggregating features of a graph are the elements of the leading eigenvector of the corresponding affinity matrix.

\section{Analysis}
In this section, we explore the LiSA algorithm in two scenarios: isomorphic graphs and graphs with random noises.

\subsection{Perfect matching for isomorphic graphs}
We demonstrate that LiSA can find the perfect matching for isomorphic graphs by leading eigenvectors.
\begin{theorem*} Isomorphic graphs are isospectral \cite[Lem. 8.1.1]{godsil2001algebraic}

Two graphs are isomorphic if and only if $A = P\tilde{A}P^T$ where $P$ is a permutation matrix. If $\mathbf{\varphi}$ and $\tilde{\mathbf{\varphi}}$ are eigenvectors corresponding to the same eigenvalue of $A$ and $\tilde{A}$,  then $\tilde{\mathbf{\varphi}} = P\mathbf{\varphi}$.
\end{theorem*}

\begin{remark}
    The converse of the above theorem is also true, i.e., if $A$ and $\tilde{A}$ are both real symmetric matrices with the same eigenvalues and there exists a permutation matrix $P$ enables 
\begin{equation}
    \tilde{\mathbf{\varphi}}_i=P\mathbf{\varphi}_i
\end{equation}
where $(\lambda_i,\mathbf{\varphi}_i)$ are eigenpair of $A$ and $(\lambda_i,\tilde{\mathbf{\varphi}}_i)$
are eigenpair of $\tilde{A}$. The permutation matrix $P$ makes 
\begin{equation}
    A = P\tilde{A}P^T,
\end{equation}
i.e., $A$ and $\tilde{A}$ are isomorphic.
\end{remark}

\begin{property}
For two isomorphic graphs $A = P\tilde{A}P^T$ where $P$ is a permutation matrix, LiSA can find the perfect matching matrix $P$ exactly.
\end{property}

\begin{proof}
According to the theorem,  $\mathbf{\varphi} = P\tilde{\mathbf{\varphi}}$ where $\mathbf{\varphi}$ and $\tilde{\mathbf{\varphi}}$ are principle eigenvectors of $A$ and $\tilde{A}$. We can obtain $P$ using the one-dimensional assignment algorithm to solve the problem \eqref{eq:1D_permutaion}. 
\end{proof}

Let $\mathbf{\phi}$ and $\tilde{\mathbf{\phi}}$ be eigenvectors corresponding to the same eigenvalue of $A$ and $\tilde{A}$. For any pair of $\mathbf{\phi}$ and $\tilde{\mathbf{\phi}}$, there exists 
\begin{equation}
    P = \arg \max_{M \in \Pi_{n \times n}} \sum  \mathbf{\phi}_i\tilde{\mathbf{\phi}}_{M(i)}, 
    \label{eq:same solution}
\end{equation}
where $\tilde{\mathbf{\phi}}_{M(i)}$ represents $\tilde{\mathbf{\phi}}_{i}$ permuted by $M$. 
This yields an absorbing counter-intuitive remark.
\begin{remark}
For $A = P\tilde{A}P^T$ and $W = A \otimes \tilde{A}$ where $P$ is a permutation matrix, the optimal solution of graph matching problem \eqref{eq:IQP} can be obtained through any pair of eigenvectors $\mathbf{\phi}$ and $\tilde{\mathbf{\phi}}$ corresponding to the same eigenvalue.
\end{remark}

\subsection{Stability analysis}
In this subsection, we discuss the stability of LiSA under deformations: $\tilde{A}=A + E$ where $E$ represents perturbations. This is a key area in the network problems \cite{2001Link,  wu2007power,wu2013accelerating, shen2015multiple}. \citet{2001Link} demonstrate that the leading eigenvector of a symmetric matrix $A$ is robust to small perturbations when \textit{eigengap} $\rho$, the difference between the first two largest eigenvalues of $A$, is large.

According to \cite[Thm.$V .2 .8$]{stewart1990matrix} and \cite[Thm.1]{2001Link}, if $\sqrt{2}\|E\|_F \leq \frac{\rho}{2}$, the perturbation of the leading eigenvector $\mathbf{\varphi}$ satisfies the inequality:

\begin{equation}
    \left\|\mathbf{\varphi}-\tilde{\mathbf{\varphi}}\right\|_2 \leq \frac{4\|E\|_{F}}{\rho-\sqrt{2}\|E\|_{F}}.
    \label{eq:perturbation}
\end{equation}
This analysis supports that small perturbation $\|E\|_{F}$ relative to $\rho$ will not significantly change the direction of the principal eigenvector. 
If we substitute $\sqrt{2}\|E\|_{F} \leq \frac{\rho}{2}$ in \eqref{eq:perturbation}, a more intuitive formula is 
\begin{equation}
    \left\|\mathbf{\varphi}-\tilde{\mathbf{\varphi}}\right\|_2 \leq 2\sqrt{2}.
\end{equation}







\section{Experiments}
We apply the proposed algorithm LiSA\footnote{The code of LiSA is available at https://github.com/BinruiShen/Lightning-graph-matching.} to the graph matching task and evaluate it from the following aspects:
\begin{itemize}
    \item Q1. Compared to other spectral-based methods, what advancements does LiSA offer ?
    \item Q2. How efficient and scalable is LiSA?
    \item Q3. When graphs have tens of millions of edges, how stable is LiSA?
\end{itemize}

\subsection{Experimental Setup}
\textbf{Datasets}: Algorithms are evaluated by three different kinds of graphs: weighted fully connected graphs, weighted sparse graphs, and unweighted sparse graphs. For a weighted graph, the weight of an edge $A_{ij}$ represents the Euclidean distance between two corresponding nodes in this paper. For unweighted graphs, $A_{ij} \in  \{0,1\}$ only indicates whether the corresponding nodes are linked or not. Graphs are from:
\begin{itemize}
    \item  Random graphs are established by generating 
$n$ pairs of random numbers to serve as the coordinates of the nodes. The \textit{Delaunay triangulation} connects the points in sparse cases.  
    \item The yeast's protein-protein interaction (PPI) \cite{vijayan2015magna++} networks contain 1,004 proteins and 4,920 high-confidence interactions.
    \item The social network comprising 'circles' (or 'friends lists') from Facebook \cite{snapnets} contains 4039 users (nodes) and 88,234 relations (edges).
    \item The social network from Twitter \cite{snapnets} includes 5,120 users (nodes) and 130,575 connections (edges).
\end{itemize}


\noindent \textbf{Baseline methods:} 
\begin{itemize}
    \item SM \cite{leordeanu2005spectral} is a classic spectral-based method to solve graph matching problems (introduced in Section 2.1). 
    \item SM-KB is a fast implementation of SM for  Koopmans-Beckmann quadratic problem (introduced in Section 2.2).
    \item DSPFP \cite{lu2016fast} is a fast doubly stochastic projected fixed-point method that has a similar iterative formula with SM-KB.
\end{itemize}


\textbf{Setting of algorithms.} LiSA and SM both use the power method to calculate the leading eigenvector. The iteration process is terminated when $\max(|\mathbf{x}^{(k)}- \mathbf{x}^{(k-1)}|)<1e^{-4}$. Since the final matching matrix depends only on the relative values between the elements of the eigenvectors, the precision of the power method is not critical. When graphs are randomly generated, all algorithms run on the same problem sets over 20 trials for each scenario. We complete all experiments in Python 3 with a single thread in an i7 2.80 GHz PC.



\subsection{Comparasion between LiSA and other spectral-based methods}
In the first set of experiments, spectral-based methods are valued by random isomorphic graph matching. The number of nodes ranges from 30 to 150 because SM runs out of memory if the number of nodes exceeds 150. Figure \ref{fig:small} illustrates that LiSA has only $\mathcal{O}(n^2)$ time cost and is significantly faster than other spectral-based algorithms.  In terms of accuracy, three algorithms achieve perfect matching in different types of graphs.



\begin{figure}[ht]
    \centering
    \subcaptionbox{Dense weighted graphs\label{fig:full}}{
        \includegraphics[width=0.25\textwidth]{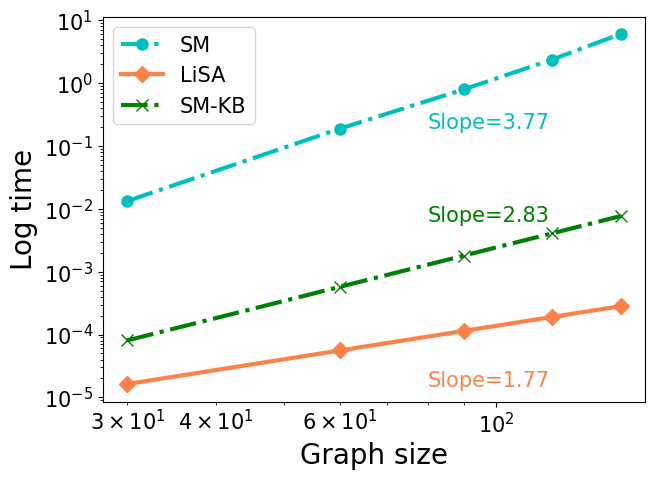}
    }
    \subcaptionbox{Sparse weighted graphs\label{fig:without_background}}{
        \includegraphics[width=0.25\textwidth]{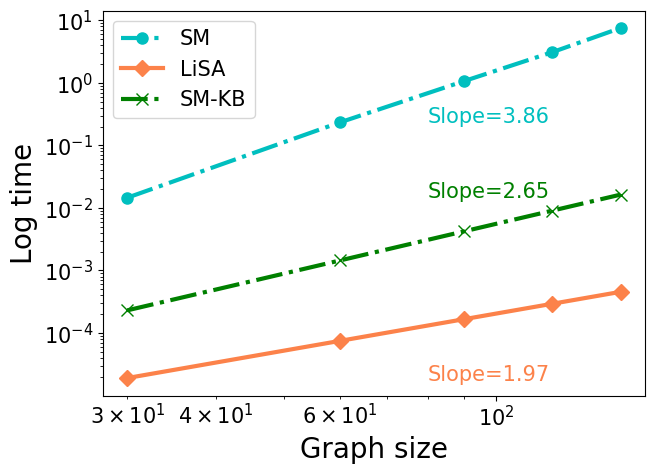}
    }
    \subcaptionbox{Sparse unweighted graphs\label{fig:without_borders}}{
        \includegraphics[width=0.25\textwidth]{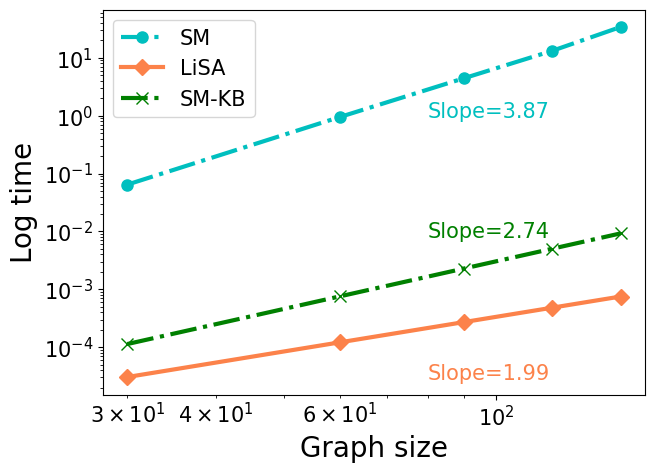}
    }
    \caption{Comparison between spectral-based algorithms.}
    \label{fig:small}
\end{figure}

\subsection{Efectiveness results}
In this subsection, we evaluate algorithms using isomorphic random graphs and graphs from real world. Figure \ref{fig:large_graphs} and Figure \ref{fig:real} show that only LiSA achieves perfect matching in all sets of graphs. In contrast, DSPFP and SM-KB only obtain optimal solutions in one set of experiments. With an increase in the number of nodes, there is a tendency for the errors of the two algorithms to potentially escalate in Figure \ref{fig:large_graphs}. Theoretically, SM-KB is capable of perfectly matching two isomorphic graphs. The failure may be attributed to a truncation error. Table \ref{tab:large} demonstrates the efficiency of LiSA: 552x and 1422x speedup with respect to SM-KB and DSPFP in fully weighted graph sets.

\begin{figure}[h]
	\centering

		\includegraphics[width=1\textwidth]{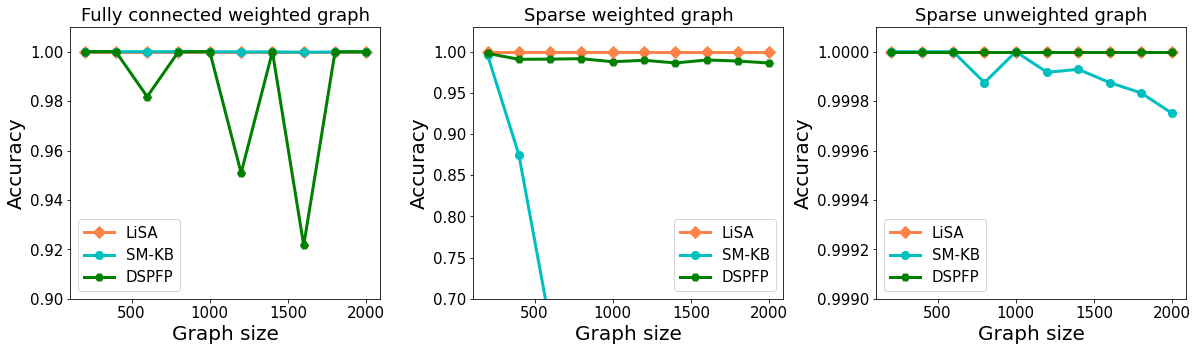}



	   \caption{Random large graphs.}
	   \label{fig:large_graphs}
\end{figure}

\begin{table}[]
\centering

\resizebox{0.8\columnwidth}{!}{%
\begin{tabular}{cccc}
\hline
Experiment type & SM-KB/LiSA & DSPFP/LiSA & Time of LiSA \\ \hline
Fully weighted   & 552x        & 1422x       & 0.02s        \\
Sparse weighted & 155x        & 680x        & 0.05s        \\
Sparse binary   & 33x         & 99x         & 0.6s         \\ \hline
\end{tabular}%
}
\caption{Running time comparison when graph size is 2000.}
\label{tab:large}
\end{table}

\begin{figure}[ht]
    \centering
    \subcaptionbox{Accuracy}{
        \includegraphics[width=0.4\textwidth]{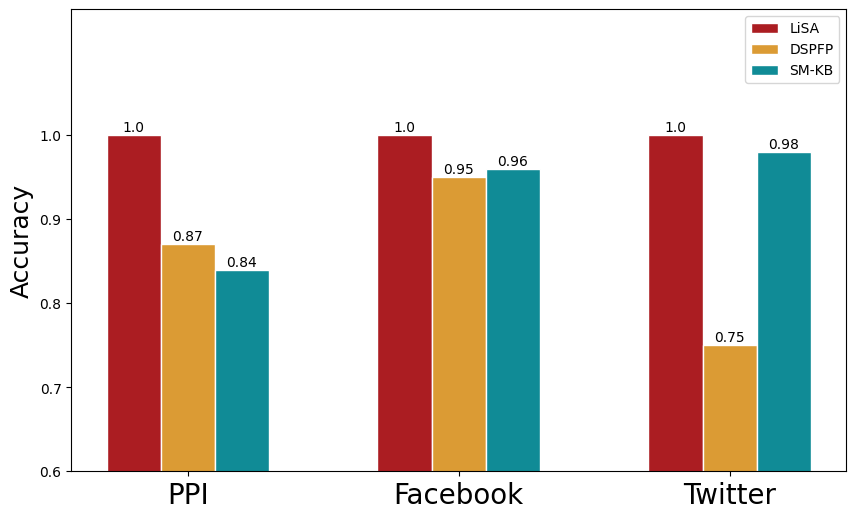}
    }\ \ \  \ \ \ 
    \subcaptionbox{Running time}{
        \includegraphics[width=0.4\textwidth]{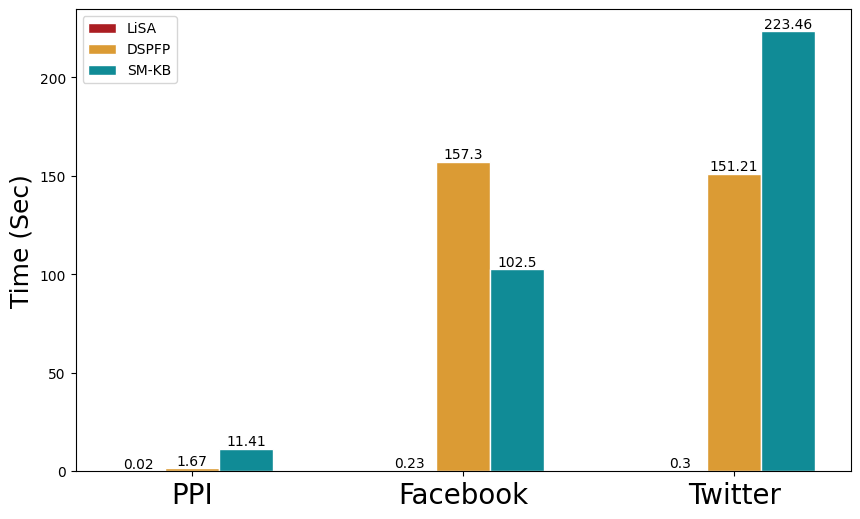}
    }
    \caption{Graphs from real world.}
    \label{fig:real}
\end{figure}

\subsection{Robustness results}
In this set of experiments, we assess the robustness of LiSA by performing fully connected weighted random graph matching under different noise levels. We construct $A$ randomly and assign random noise to random $n$ edges to construct $\tilde{A}$:
\begin{equation}
    \tilde{A}_{ij} =|A_{ij} + l\epsilon|,
\end{equation}
where $l$ represents the level of noise and $\epsilon$ is a random value ranges from $(-1\%, 1\%)$. 
\begin{figure}[h]
	\centering
		\includegraphics[width=0.9\textwidth]{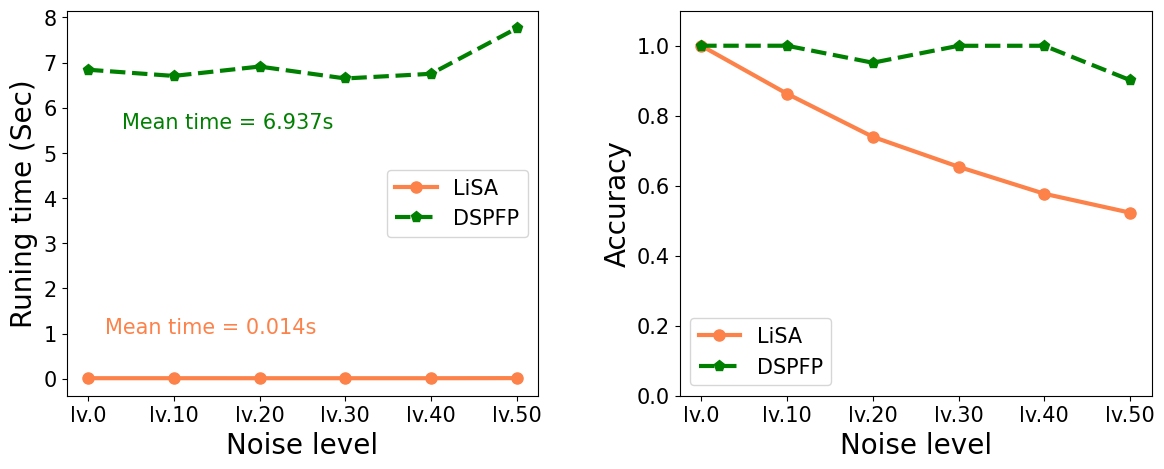}
	   \caption{Random graphs with noise.}
	   \label{fig:noi_compare}
\end{figure}
Figure \ref{fig:noi_compare} depicts that DSPFP is more robust to noise, while LiSA is much faster than it.

\subsection{Evaluation of stability}
In this subsection, we evaluate LiSA on four kinds of large graphs:  weighted fully connected graphs, weighted sparse graphs, unweighted sparse graphs, and weighted fully connected graphs with noise of level 20. The number of nodes varies from one thousand to ten thousand and the maximum number of edges reaches about fifty million.
\begin{figure}[h]
    \centering
    \includegraphics[width=0.9\textwidth]{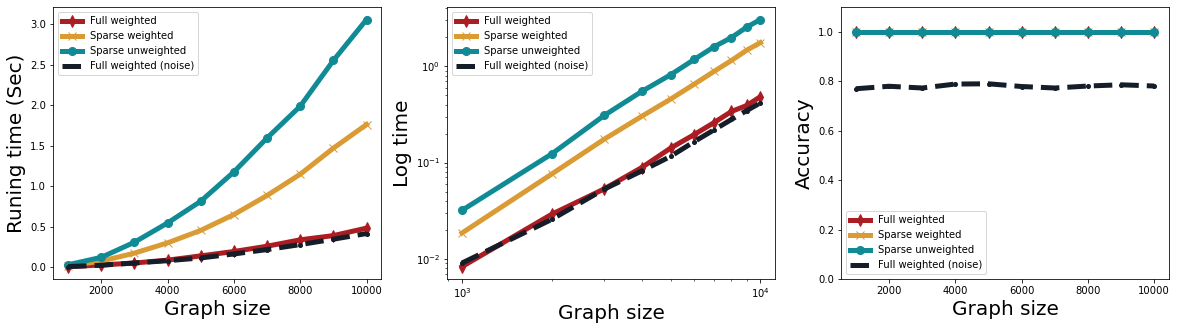}
    \caption{Matching on super large graphs.}
    \label{fig:large}
\end{figure}
As Figure \ref{fig:large} shows, the running time of LiSA is at most 3s for different types of large graphs with 10000 nodes. Its accuracy remains high and stable.

\section{Conclusions}

We propose a lightning graph matching algorithm that leverages the invariance of eigenvectors of a graph's adjacency matrix to node permutation. The finding correspondence between nodes becomes matching elements of two leading eigenvectors that can be completed efficiently by sorting. We prove that LiSA can find the perfect correspondence between two isomorphic graphs. The experimental results showcase the algorithm's robustness and scalability with respect to graph size.

A limitation of our algorithm is its inability to handle vector attributes of nodes and edges. Additionally, compared to DSPFP, LiSA is more sensitive to noise. Consequently, LiSA excels in addressing large problems with low noise. Future research may investigate machine learning techniques for aggregating graphs into points and substituting the spectral-based method to improve its robustness. Moreover, extending our algorithm to adopt high-order features, such as the angle between three nodes, is another potential avenue for exploration.

\section{Acknowledgements}

The research is supported by the Interdisciplinary Intelligence Super Computer Center of Beijing Normal University at Zhuhai. This work was partially supported by the Natural Science Foundation of China (12271047); UIC research grant (R0400001-22; UICR0400008-21; UICR04202405-21); Guangdong College Enhancement and Innovation Program (2021ZDZX1046); Key Programme Special Fund in XJTLU (KSF-E-32), Research Enhancement Fund of XJTLU (REF-18-01-04); Guangdong Provincial Key Laboratory of Interdisciplinary Research and Application for Data Science, BNU-HKBU United International College (2022B1212010006).


 \bibliographystyle{elsarticle-num-names} 
 \bibliography{cas-refs}





\end{document}